\renewcommand*{\@biblabel}[1]{\hfill#1.}
\theoremstyle{plain} 
\newtheorem{thm}{Theorem}
\theoremstyle{definition}
\theoremstyle{remark}
\numberwithin{equation}{section}
\newcommand{\set}[1]{\left\{#1\right\}}
\newcommand{\pd}{\,\partial}
\newcommand{\comment}[1]{}
\begin{document}

\title[]{On dummy variables of structure-preserving transformations}
\author[J C Ndogmo]{J C Ndogmo}

\address[J C Ndogmo]{School of Mathematics\\
University of the
Witwatersrand\\
Private Bag 3, Wits 2050\\
South Africa}
\email{jean-claude.ndogmo@wits.ac.za}
\begin{abstract} A method  is given for obtaining equivalence subgroups of
a family of differential equations from the equivalence group  of
simpler equations of a similar form, but in which the arbitrary
functions specifying the family element depend on fewer variables.
Examples of applications to classical equations are presented, some
of which show how the method can actually be used for a much easier
determination of the equivalence group itself.
\end{abstract}
\keywords{Equivalence group, equivalence subgroups, arbitrary functions}

\subjclass[2000]{34C20, 35A30}

\maketitle

\section{Introduction}
\label{s:intro}

Denote collectively by $\mathscr{A}$ the set of all arbitrary
functions specifying the family element in a collection
$\mathcal{F}$ of differential equations of the form
\begin{equation}
\label{eq:geq} \Delta (x, y_{(n)}; \mathscr{A} )=0,
\end{equation}
where $x= (x^1, \dots, x^p)$ is the set of independent variables,
and $y_{(n)}$ denotes $y$ and all its derivatives up to the order
$n.$  In the most general case, the function $\mathscr{A}$ may
depend on $x,$ $y,$ and the derivatives of $y$ up to a certain order
not exceeding $n,$ but quite often $\mathscr{A}$ is simply a
function of $x,$ or a constant.\par
Let $G$ be the Lie pseudo-group  of point transformations  of the
form
\begin{equation}\label{eq:gtransfo}
x= \varphi (z, w), \qquad y = \psi (z, w),
\end{equation}
where  $z= (z^1, \dots, z^p)$ is the new set of independent
variables, while $w=w(z)$ is the new dependent variable. The group
$G$ is infinite because as explained in a paper by Tresse \cite[P.
11]{tresse}, its elements depend in general on arbitrary functions
and not on arbitrary constants, and a Lie pseudo-group is to be
understood here in the sense of \cite{olv-maurer2, mrz-surv}, i.e.
as the infinite-dimensional counterpart of a local Lie group of
transformations. We say that $G$ is the group of equivalence
transformations of \eqref{eq:geq} if it is the largest Lie
pseudo-group of point transformations that map \eqref{eq:geq} into
an equation of the same form, that is, if in terms of the same
function $\Delta$ appearing in \eqref{eq:geq}, the transformed
equation has an expression of the form
\begin{subequations}\label{eq:trfoeq}
\begin{align}
\Delta (z, w_{(n)}; \mathscr{B} )=0,& \label{eq:trfo1}  \\[-5mm]
\intertext{where \vspace{-2.5mm}} \mathscr{B}  = T\left(\mathscr{A}
\right),& \label{eq:Gc}
\end{align}
\end{subequations}
 for a certain function $T,$ is the new set of arbitrary functions
specifying the family element in the transformed equation.
Equivalently, $G$ will map elements of $\mathcal{F}$ into
$\mathcal{F},$ and when this holds, \eqref{eq:gtransfo} is called an
equivalence transformation of the original equation \eqref{eq:geq}.
By a result of Lie \cite{lieGc}, \eqref{eq:Gc} defines another group
of transformations $G_c$ induced by $G,$ and acting on the arbitrary
functions $\mathscr{A}$ of the original equation. Invariants of the
group $G_c$  are termed invariants of the differential equation
\eqref{eq:geq}. These invariants are functions which depend on the arbitrary
functions $\mathscr{A}$ of the original equation, and which have exactly
the same expression when they are also formed for the transformed
equation, and they play a crucial role in the classification of the
family of equation \cite{schw-pap, schw-bk}.
\par
Early developments in applications of Lie groups for finding
equivalence transformations of a given differential equation (DE)
started in the work of Lie \cite{lie-ovsi1} and were later pursued
by Tresse \cite{tresse} and Ovsiannikov ~\cite{ovsi-bk}. More recent developments  based on Cartan equivalence methods originated in the works of Olver and collaborators \cite{olv-maurer2, olv-fls2}.

   In this paper, we show that  for a given differential equation in which the
arbitrary functions defining the family element are functions of
independent variables alone, the  equivalence group is a subgroup of
a differential equation of a similar form, but in which the
arbitrary functions also depend on the dependent variable and its
derivatives up to a given order. An extension of the theorem to
equations with arbitrary functions depending on a  subset of the set
consisting of all independent variables, and the dependent variable
and its  derivatives  up to a given order is provided. Examples of
applications to a number of classical equations show how to obtain
equivalence subgroups of a more complex family of equations from
those of simpler ones.

\section{Dummy variables of the equivalence group}

If we consider for instance the general linear differential equation
\begin{equation}\label{eq:glin}
y^{(n)}+   a^1 (x)y^{(n-1)} +  a^2 (x)y^{(n-2)} + \dots + a^n(x) y=0
\end{equation}
the arbitrary functions  $\mathscr{A}$ in equation \eqref{eq:geq}
are the coefficients $a^i(x),$ for $i=1, \dots, n.$ Thus in this
case it appears that $\mathscr{A}=\mathscr{A}(x)$ is a function of
the independent variable alone.  This is quite often the case with
various other linear or nonlinear equations. However, more
generally,  $\mathscr{A}$ arises as a function of the independent
variables, and the dependent variable $y$ and its derivatives up to
a certain order.
\begin{thm}\label{th:dumb}
Denote by {\rm (A)} Eq. \eqref{eq:geq} with
$\mathscr{A}=\mathscr{A}(x)$ and by {\rm  (B)} the same equation,
but in which $\mathscr{A}$ depends on $x$, $y$, and the derivatives
of $y$ up to a certain order $s,$ i.e., in which $\mathscr{A}=
\mathscr{A} (x, y_{(s)}).$ Similarly, denote by $G^A$ and $G^B$ the
equivalence groups for  ${\rm (A)}$ and ${\rm (B)},$ respectively.
Then $G^A$ is a subgroup of  $G^B.$
\end{thm}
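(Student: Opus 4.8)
The plan is to realize $G^A$ as a pseudo-group that already preserves the larger family (B), and then to conclude by the maximality built into the definition of $G^B$. The first step I would record is the elementary inclusion of families: every equation of type {\rm (A)}, namely $\Delta(x, y_{(n)}; \mathscr{A}(x))=0$, is at the same time an equation of type {\rm (B)}, the one for which the function $\mathscr{A}(x, y_{(s)})$ happens to be independent of $y_{(s)}$. Writing $\mathcal{F}_A$ and $\mathcal{F}_B$ for the corresponding collections of equations, this says $\mathcal{F}_A \subseteq \mathcal{F}_B$.

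Next I would analyse the action of a fixed point transformation $g$ of the form \eqref{eq:gtransfo} on an equation $\Delta(x, y_{(n)}; \mathscr{A})=0$, separating it into two independent effects: (i) the transformation of the differential structure $\Delta$, governed by the prolongation of $g$ to the jet variables $y_{(n)}$, and (ii) the replacement, inside $\mathscr{A}$, of its arguments by their expressions in the new variables. The decisive observation is that effect (i) depends only on $g$ and on $\Delta$, and not at all on the functional nature of $\mathscr{A}$, whereas in (ii) the substituted arguments are $x=\varphi(z,w)$ together with the prolonged expressions for $y_{(s)}$, which are functions of $z$ and of $w$ and its derivatives up to order $s$. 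Consequently, regardless of how $\mathscr{A}$ depends on its arguments, after transformation it becomes a coefficient that is a function of $(z, w_{(s)})$; the extra slots $y_{(s)}$ are, in this sense, dummy variables that are merely carried along.

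With this decomposition in hand, let $g \in G^A$. By definition $g$ carries each $\Delta(x, y_{(n)}; \mathscr{A}(x))=0$ into $\Delta(z, w_{(n)}; \mathscr{B}(z))=0$, so in particular the structural step (i) is compatible with $\Delta$ and the induced map $T$ of \eqref{eq:Gc} is well defined. Applying the very same $g$ to an arbitrary equation of $\mathcal{F}_B$, step (i) proceeds identically, while step (ii) now turns the coefficient $\mathscr{A}(x, y_{(s)})$ into a function $\mathscr{B}$ of $(z, w_{(s)})$, which is exactly the dependence admitted in {\rm (B)}. Hence $g$ maps $\mathcal{F}_B$ into itself, and since $G^A$ is a group, $g^{-1}\in G^A$ shows it maps $\mathcal{F}_B$ onto itself. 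Thus $G^A$ is a Lie pseudo-group of point transformations preserving the family {\rm (B)}, and because $G^B$ is the \emph{largest} such pseudo-group, we obtain $G^A \subseteq G^B$, i.e. $G^A$ is a subgroup of $G^B$.

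I expect the main obstacle to be the rigorous justification of the clean separation used above: that the prolongation of $g$ acts on $\Delta$ independently of how $\mathscr{A}$ depends on its arguments, and that transforming $y_{(s)}$ introduces only derivatives of $w$ up to order $s$, so that the order $s$ defining the family {\rm (B)} is preserved rather than raised. One must also check that the induced transformation $T$ is given by the same rule in both settings, the sole difference being the enlarged list of arguments fed into $\mathscr{A}$, so that the constraints on $g$ that define $G^A$ genuinely contain those that define $G^B$.
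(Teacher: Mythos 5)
Your proposal is correct and follows essentially the same route as the paper's own proof: the paper likewise transforms {\rm (B)} in two stages, first ignoring the arguments of $\mathscr{A}$ (treating them as \emph{dummy}), so that by virtue of the transformation being in $G^A$ the coefficients become $B_j = F_j(z, A_1, \dots, A_m)$ with no explicit dependence on $w$ or its derivatives, and then substituting the transformed arguments, which turns each $A_j(x, y_{(s)})$ into a composition with a function of $(z, w_{(s)})$ and hence preserves the form of {\rm (B)}. The only differences are cosmetic: your explicit appeal to the maximality of $G^B$ (which the paper leaves implicit in its definition of the equivalence group) and your opening remark that $\mathcal{F}_A \subseteq \mathcal{F}_B$, which is not actually used in the argument.
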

\begin{proof}
To fix ideas, suppose that the equation has $m$ arbitrary functions
$A_1,\dots, A_m$ specifying the family element in $\mathcal{F}$ (and
collectively denoted by $\mathscr{A}$), and that in {\rm (B)} we
have $A_j= A_j (x, y_{(s)})$ for all $j=1, \dots,m.$  On the other
hand, for each multi-index $J= (j_1, \dots, j_k),$ where $1\leq j_r
\leq p\,$ for $r=1, \dots, k,$ we use the notation
\begin{equation}\label{eq:yder}
    D_{J}\, y \equiv \frac{\pd^{\,k} y(x)}{\pd x^{j_1} \pd x^{j_2} \dots \pd x^{j_k}
    }\,.
\end{equation}
 Suppose now that \eqref{eq:gtransfo} is an equivalence
transformation of ${\rm (A)}$.  To perform the transformation of
${\rm (B)}$ by \eqref{eq:gtransfo}, we may first ignore the
arguments of the function $\mathscr{A},$ in which sense these
arguments can be considered as dummy. Then, for every multi-index
$J$ with $0\leq \# J=k \leq n,$ each partial derivative $D_{J}\, y$
present in the equation is replaced by a function $T_J (z,
w_{(k)}).$  When all such replacements are made, and all remaining
occurrences of independent variables are expressed in terms of $z$
and $w$ by another application of \eqref{eq:gtransfo}, the resulting
equation is the transformed equation \eqref{eq:trfo1}, in which
$\mathscr{B}$ collectively denotes the new functions $B_1, \dots,
B_m.$ According to \eqref{eq:Gc}, each transformed function $B_j$
has an expression of the form
\begin{align}\label{eq:Bj}
B_j &= F_j (z, A_1, \dots, A_m), \qquad (j=1, \dots, m), \quad r_j
\leq n,
\end{align}
in terms of the original functions $A_1, \dots, A_m,$ and for a
certain function $F_j,$  where $F_j$ does not depend explicitly on
the dependent variable $w$ and its derivatives, on the assumption
that \eqref{eq:gtransfo} is an equivalence transformation of ${\rm
(A)}$. To complete the transformation of ${\rm (B)},$ we may now
apply again \eqref{eq:gtransfo}, to transform the arguments of the
functions $A_j,$ and this will transform $A_j (x, y_{(s)})$ into a
composition of $A_j$ and a function of  $z$  and $w_{(s)},$ and the
latter transformation will thus have no effect on the form of the
transformed equation. Consequently,  the transformed version of
${\rm (B)}$ has the same form as ${\rm (B)},$ and this completes the
proof of the Theorem.
\end{proof}

The proof of Theorem \ref{th:dumb} suggests the following immediate
extension to the case where the function $\mathscr{A}$ in {\rm (A)}
may depend only on a subset of the set of all variables and the
derivatives of $y$ up to a given order.

\begin{thm}\label{th:dumb2}
Denote by {\rm (A)} Eq. \eqref{eq:geq} with
$\mathscr{A}=\mathscr{A}(X),$ where $X$ is a subset of the set
consisting of all independent variables, of the dependent variable
$y$ and its derivatives up to a certain order $r$.  Denote also by
{\rm (B)} the same equation, but in which $\mathscr{A}= \mathscr{A}
(x,  y_{(s)}),$ with $r \leq s.$ Similarly, denote by $G^A$ and
$G^B$ the equivalence groups for ${\rm (A)}$ and ${\rm (B)},$
respectively. Then $G^A$ is a subgroup of  $G^B.$
\end{thm}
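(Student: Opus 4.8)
The plan is to mirror the argument of Theorem~\ref{th:dumb} almost verbatim, treating the arguments collected in $X$ as dummy variables, since the essential mechanism is that an equivalence transformation of (A) is determined entirely by how it acts on the \emph{form} of the equation and not on the particular functional dependence of $\mathscr{A}$ on its arguments. First I would fix notation as in the previous proof: suppose there are $m$ arbitrary functions $A_1,\dots,A_m$, each satisfying $A_j=A_j(X)$ in (A), where $X\subseteq\{x^1,\dots,x^p,y,\dots,D_J y : \#J\le r\}$, and $A_j=A_j(x,y_{(s)})$ in (B), with $r\le s$. I would then let \eqref{eq:gtransfo} be an arbitrary equivalence transformation of (A), so that it maps every element of the family (A) into another element of the same form.

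Next I would carry out the transformation of (B) in the same two-stage manner. In the first stage one ignores the arguments of $\mathscr{A}$ — regarding them as dummy — and applies \eqref{eq:gtransfo} to the differential operator part of the equation. Because (A) and (B) differ only in the declared dependence of the $A_j$, and not in the way $\Delta$ depends on $x$, $y_{(n)}$ and the symbols $\mathscr{A}$, this first stage is \emph{identical} for (A) and (B): each derivative $D_J y$ with $0\le \#J=k\le n$ is replaced by the same function $T_J(z,w_{(k)})$ as in the proof of Theorem~\ref{th:dumb}, and all residual independent variables are re-expressed through \eqref{eq:gtransfo}. Since \eqref{eq:gtransfo} is assumed to be an equivalence transformation of (A), the induced action on the arbitrary functions has the form
\begin{equation}\label{eq:Bj2}
B_j = F_j(z, A_1,\dots, A_m), \qquad (j=1,\dots,m),
\end{equation}
where, crucially, $F_j$ carries no explicit dependence on $w$ or its derivatives. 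This is the only place where the hypothesis that (A) admits \eqref{eq:gtransfo} as an equivalence transformation is used, and it is inherited unchanged from Theorem~\ref{th:dumb}.

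The second stage is where the hypothesis $r\le s$ enters, and it is the step I expect to be the main (though mild) obstacle. Here one re-applies \eqref{eq:gtransfo} to transform the \emph{arguments} $X$ of each $A_j$. Under \eqref{eq:gtransfo} every variable in $X$ — whether an independent variable, the dependent variable, or a derivative $D_J y$ with $\#J\le r$ — is expressed as a function of $z$ and $w_{(s)}$, using the chain rule to convert $x$-derivatives of $y$ into $z$-derivatives of $w$. The point is that, because the arguments available in (B) are $x$ and $y_{(s)}$ and because $r\le s$, the derivatives appearing in $X$ are all of order at most $s$, so their transforms involve $w$ only up to order $s$; hence the composed function $A_j\circ(\text{transform of }X)$ is a legitimate function of $(x,y_{(s)})$ after re-expression, i.e. a function of the type allowed in (B). I would emphasize that this substitution affects only the \emph{arguments} of $A_j$ and therefore does not alter the overall form of the transformed equation already fixed by \eqref{eq:Bj2}. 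Consequently the transformed version of (B) again has the form of (B), so \eqref{eq:gtransfo} is an equivalence transformation of (B); as this holds for every element of $G^A$, we conclude $G^A\subseteq G^B$, completing the proof. The condition $r\le s$ is exactly what guarantees that the re-expressed arguments remain within the admissible argument set of (B), and checking this containment is the only substantive point beyond a direct repetition of the earlier argument.
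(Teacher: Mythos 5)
Your overall two-stage strategy (transform the equation while treating the arguments of the $A_j$ as dummy, then transform those arguments) is the same as the paper's, but your first stage contains a genuine error. You assert that, because \eqref{eq:gtransfo} is an equivalence transformation of (A), the induced action on the arbitrary functions is $B_j = F_j(z, A_1, \dots, A_m)$ with no explicit dependence on $w$ or its derivatives, and that this is ``inherited unchanged'' from Theorem \ref{th:dumb}. That inheritance fails: in Theorem \ref{th:dumb} the absence of $w$-dependence in $F_j$ was a consequence of the hypothesis $\mathscr{A}=\mathscr{A}(x)$, i.e.\ of the fact that the (A)-form itself forbids any dependence of the coefficients on the dependent variable. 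Here the set $X$ may contain $y$ and its derivatives up to order $r$, so an equivalence transformation of (A) is only required to map the coefficients into functions of (a subset of) $(z, w_{(r)})$, and in general $F_j$ does depend explicitly on $w_{(r)}$. The paper's own computation illustrates this: for the hyperbolic equation \eqref{eq:hyperu}, which is a type-(A) equation with $X=\{t,x,u\}$ and $r=0$, the equivalence group \eqref{eq:eqv-hyperu4} turns the coefficient $a^3$ into $a^3 R_y S_z (1 + J/(L w))$ (the constant term of \eqref{eq:hyperu4} rewritten as a coefficient of $w$), which depends on $w$ explicitly and not merely through the arguments of $a^3$. Accordingly, the paper's proof writes $B_j = F_j(z, w_{(r)}, A_1, \dots, A_m)$, cf.\ \eqref{eq:Bj2}, rather than your formula.

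This error also causes you to misplace the hypothesis $r \le s$. In your argument it is invoked only in the second stage, where it does no real work: in (B) the arguments of each $A_j$ are $(x, y_{(s)})$ (not $X$, as you write there), and these transform into functions of $(z, w_{(s)})$ regardless of how $r$ compares with $s$. Indeed, were your stage-one claim true, your argument would prove the theorem with no constraint relating $r$ and $s$ at all, which is a warning sign. The hypothesis is needed exactly where your claim is false: the explicit dependence of $F_j$ on $w_{(r)}$ produced in the first stage must lie inside the admissible argument set $(z, w_{(s)})$ of a type-(B) coefficient, and it is $r \le s$ that guarantees this. With the corrected first stage $B_j = F_j(z, w_{(r)}, A_1, \dots, A_m)$ and the second-stage substitution $A_j = A_j(z, w_{(s)})$, the combined dependence is on $(z, w_{(\max(r,s))}) = (z, w_{(s)})$, which is precisely the (B)-form; your proof needs this correction in order to stand.
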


\begin{proof}
Using the same notation as in the proof of Theorem \ref{th:dumb},
and reasoning in a similar way, if we apply $G^A$ to transform ${\rm
(B)}$ while first ignoring the arbitrary functions' arguments, the
form of the equation is preserved, and the new arbitrary functions
$B_j$ now take the form
\begin{align}\label{eq:Bj2}
B_j &= F_j (z, w_{(r)}, A_1, \dots, A_m), \qquad (j=1, \dots, m).
\end{align}
We can then transform the arguments  $(x, y_{(s)})$ in  each of the
functions $A_j$ appearing in \eqref{eq:Bj2}, to obtain expressions
of the form $A_j= A_j(z, w_{(s)})$ in \eqref{eq:Bj2}, and the latter
transformation will not affect the form of the equation, as already
seen.
\end{proof}

  It should however be noted that as stated, Theorem \ref{th:dumb}
and Theorem \ref{th:dumb2} holds only under the assumption that in
{\rm (B)}, the function $\mathscr{A}$ depends explicitly on $x=(x^1,
\dots, x^p), \,y,$ and all derivatives of $y$ up to the stated order
$s.$ This is first because when a derivative of $y$ of a given order
$s$ is transformed under a given group, the resulting expression
depends in general on all derivatives of the new dependent variable
$w$ up to the order $s.$ On the other hand, the transformed function
$B_j$ will in general depend on the full set of the new independent
variables $z= (z^1, \dots, z^p)$, and not only on a subset of this
set. Thus, if in {\rm (B)} the function $\mathscr{A}$ does not
explicitly depend on all independent variables or on all derivatives
of $y$ up to the stated order $s,$ this restriction should be
imposed on $G^A,$ and this will yield in general a smaller
equivalence subgroup for {\rm (B)}.\par

   On the other hand,  the converse is not true in Theorem
\ref{th:dumb}, and this is simply because if {\rm (A)} is
transformed by an equivalence transformation of {\rm (B)}, there is
no guarantee that in the transformed equation the functions $B_j$
will not depend on $w$ or its derivatives. Similarly, the converse
to Theorem ~\ref{th:dumb2} is not true.\par

\section{Application to classical equations}

\subsection{The general linear homogeneous ODE}
In the sequel, we shall use the notation $f_a = \pd f/ \pd a,$ for
every function $f$ with argument $a,$ hence subscripts in such
functions will denote differentiation. We shall also use the
notation $\pd_a= \frac{\pd}{\pd a}$ for every variable $a, $  and a
transformed equation will  be assumed to be expanded as a polynomial
in the dependent variable and its derivatives.\par

If we consider for instance \eqref{eq:glin}, its equivalence
transformations are given by
\begin{equation}\label{eq:ET-glin}
x= S(z), \qquad y= L(z) w,
\end{equation}
%
where $S$ and $L$ are arbitrary functions. It follows from Theorem
\ref{th:dumb} that if we assume in \eqref{eq:glin} that the
coefficients $a^i$ depends also on  $y$ and its derivatives up to a
certain order $s,$ then \eqref{eq:ET-glin}  remains an equivalence
transformation of the resulting equation. Equations of the latter
form frequently appear with $s=0,$ so that $a^i= a^i (x,y),$ which
gives rise to a nonlinear equation of the form
\begin{equation}\label{eq:gliny}
y^{(n)}+   a^1 (x,y)y^{(n-1)} +  a^2 (x,y)y^{(n-2)} + \dots +
a^n(x,y) y=0.
\end{equation}
After some calculations, the equivalence group of \eqref{eq:gliny}
is found for $n=3$ to be of the form
\begin{equation}\label{eq:eqv-gliny}
x= S(z), \qquad y= L(z) w+ J(z),
\end{equation}
where $S, L,$ and $J$ are arbitrary functions. It can be shown by
induction that \eqref{eq:eqv-gliny} also holds for all $n \geq 3$ in
\eqref{eq:gliny}
.  Theorem \ref{th:dumb2} states that for any other variant of
\eqref{eq:gliny} in which the coefficients $a^i$ are of the form
$a^i= a^i(X),$ where $X \subset \set{x, y},$ the equivalence group
must be of the predefined form \eqref{eq:eqv-gliny}, where the
functions $S, L,$ and $J$ are to be specified. Finding the
equivalence group of an equation in a more specific form naturally
greatly simplifies calculations.  If for instance we suppose that
$a^i(X)= a^i(y),$ for all $i,$ so that \eqref{eq:gliny} reduces to
\begin{equation}\label{eq:glin0y}
y^{(n)}+   a^1 (y)y^{(n-1)} +  a^2 (y)y^{(n-2)} + \dots + a^n(y)
y=0,
\end{equation}
then since the $a^i$ depend on $y$ alone, $L$ and $J$ must be
constant functions, so that the transformation of $y$ reduces to $y=
k_3\, w + k_4,$ for some constants $k_3$ and $k_4.$ For $n=3,$ under
the corresponding transformations of $x$ and $y$,
 the term not involving the dependent variable
$w$ and its derivatives as a factor in the transformation of
\eqref{eq:glin0y} is $(k_4 a^3 S_z^3)/  k_3$. Since this term may
however depend on $w$ but not on $z,$ we infer that $S_z$ must be a
constant function, so that  the transformation of the independent
variable $x$ must also be a linear function of the form $x= k_1 z+
k_2.$ It is readily seen that the set of transformations
\begin{equation}\label{eq:eqv-glin0y}
x= k_1 z+ k_2, \qquad y= k_3 w+ k_4,
\end{equation}
preserves the form of \eqref{eq:glin0y} for $n=3,$ and therefore
defines its equivalence group. It is also readily found by induction
on $n$ that this remains true for all $n \geq 3.$ Using
\eqref{eq:eqv-gliny} as the predefined form of the equivalence group
leads in a similar manner to a much easier determination of the
equivalence group for other possible values of $\small X.$ In
addition, $X$ needs not be the same for all of the coefficients
$a^i,$ and thus to apply Theorem \ref{th:dumb2} here we only need
 to have $a^i= a^i(X^i),$ with $X^i \subset
\set{x,y}.$ In particular, some of the $a^i$ might be constants.

\subsection{The linear hyperbolic equation}

 Consider the PDE in
two independent variables $t$ and $x$ of the form
\begin{equation}\label{eq:hyper}
u_{tx} + a^1(t,x) u_t + a^2(t,x) u_x + a^3(t,x)u=0,
\end{equation}
generally referred to as the linear hyperbolic second-order
equation. It is well known (see e.g. \cite{ibra-lap, waf-joint})
that the group $G$ of equivalence transformations of this equation
is given in terms of new independent variables $y$ and $z,$ and
dependent variables $w,$ by invertible transformations of the form
\begin{equation}\label{eq:eqv-hyper}
t= R(y), \quad x= S(z), \quad \text{ and }  u= L(y,z) w,
\end{equation}
where $R,\, S,$ and $L$ are arbitrary functions satisfying the
non-vanishing Jacobian condition
\begin{equation}\label{eq:jacohyp1}
R_y S_z L \neq 0,
\end{equation}
where $R_y = \pd_y R$ and $S_z= \pd_z S.$ Consider now a nonlinear
extension of \eqref{eq:hyper} of the form
\begin{equation}\label{eq:hyperu}
u_{tx} + a^1(t,x,u) u_t + a^2(t,x, u) u_x + a^3(t,x, u)u=0,
\end{equation}
in which the dependent variable $u$ also appears as argument in the
arbitrary coefficients. We undertake the somewhat lengthy
calculation of the equivalence group of \eqref{eq:hyperu} which is
not available in the literature, to illustrate how the knowledge of
such a group can be utilized for a much easier determination of the
equivalence group of equations of a similar form. The equivalence
group $H$ of \eqref{eq:hyperu} must be sought in the form
\begin{equation}\label{eq:eqv-hyperu}
t= R(y,z,w), \quad x= S(y,z,w), \quad \text{ and }  u= T(y,z,w),
\end{equation}
for some functions $R, S,$ and $T$ to be specified, and which must
satisfy the non-vanishing Jacobian condition
\begin{equation}\label{eq:jacohyp2}
 \left(-R_z S_y+R_y S_z\right) T_w+\left(R_z S_w-R_w S_z\right)
 T_y+\left(-R_y S_w+R_w S_y\right) T_z \neq 0.
\end{equation}
 However, we notice that under \eqref{eq:eqv-hyperu}, $u_t$ is
 transformed into
$$
u_t= \frac{-S_z T_y+S_y T_z+\left(S_w T_z -S_z T_w\right)
w_y+\left(S_y T_w-S_w T_y\right) w_z}{R_z S_y-R_y S_z+\left(R_z
S_w-R_w S_z\right) w_y+\left(R_w S_y-R_y S_w\right) w_z}.
$$
 The occurrence of derivatives of $w$ in the denominator of this
 transformed expression for $u_t$ will give rise in the expression of $u_{tx}$
to undesired terms in $w_{zz}$ and $w_{yy}$ as well as nonlinear
terms of the form $w_z^i
 w_y^j,$ where $i$ and $j$ are some natural numbers, and they should
 therefore disappear. This disappearance  of derivatives is translated into the
 conditions
 \begin{equation}\label{eq:denomf}
 R_z S_w-R_w S_z=0, \qquad  -R_y
S_w+R_w S_y=0,
 \end{equation}
under which the Jacobian condition \eqref{eq:jacohyp2} is reduced to
\begin{equation}\label{eq:jacohyp2b}
\left(-R_z S_y+R_y S_z\right) T_w \neq 0.
\end{equation}
If we suppose that $R_w=0$ and $S_w \neq 0,$ then it follows from
\eqref{eq:denomf} that $R_y= R_z=0,$ which contradicts
\eqref{eq:jacohyp2b}. Thus we have $R_w=0$ if and only if $S_w=0.$
On the other hand, if we assume that $R_w \neq 0,$ then using
\eqref{eq:denomf} we may write
$$
S_y = R_y (S_w / R_w), \qquad R_z= S_z (R_w / S_w),
$$
which also contradicts \eqref{eq:jacohyp2b}, so that $R_w=S_w=0,$
and the resulting expressions in  \eqref{eq:eqv-hyperu} take the
form
\begin{equation}\label{eq:eqv-hyperu2}
t= R(y,z), \quad x= S(y,z), \quad \text{ and }  u= T(y,z,w).
\end{equation}
Under the new change of variables, the terms in $w_{yy}$ and
$w_{zz}$ in the transformed equation are given by
$$
R_z S_z (-R_z S_y+ R_y S_z)T_w w_{yy} \quad \text{ and } \quad R_y
S_y (-R_z S_y+ R_y S_z)T_w w_{zz},
$$
respectively. On account of \eqref{eq:jacohyp2b}, the vanishing of
these terms is reduced to the condition
$$
R_z S_z= R_y S_y=0,
$$
which on account of the Jacobian condition  \eqref{eq:jacohyp1}
readily yields
$$
R_z=S_y=0.
$$
Consequently, \eqref{eq:eqv-hyperu} reduces to
\begin{equation}\label{eq:eqv-hyperu3}
t= R(y), \quad x= S(z), \quad \text{ and }  u= T(y,z,w).
\end{equation}
Under this new set of transformations, \eqref{eq:hyperu} takes the
form
\begin{equation}\label{eq:hyperu3}
\begin{split}
&\frac{w_y w_z T_{w,w}}{T_w}+w_z \left(a^2
R_y+\frac{T_{y,w}}{T_w}\right)+w_y \left(a^1 S_z+\frac{T_{z,w}}{T_w}\right)\\
 &+\frac{a^3 T R_y S_z+a^1 S_z T_y+a^2 R_y T_z+T_{y,z}}{T_w}
 +w_{y,z}=0,
\end{split}
\end{equation}

which shows that $T$ must be linear in $w.$ Consequently, the
required change of variables must be of the form
\begin{equation}\label{eq:eqv-hyperu4}
t= R(y), \quad x= S(z), \quad \text{ and }  u=  L(y,z) w+ J(y,z),
\end{equation}
and \eqref{eq:eqv-hyperu4} transforms \eqref{eq:hyperu} into
\begin{equation}\label{eq:hyperu4}
\begin{split}
& \frac{J a^3 R_y S_z}{L}+\left(\frac{L_z}{L}+a^1 S_z\right)
w_y+\left(\frac{L_y}{L}+a^2 R_y\right) w_z\\
&+\frac{w \left(a^2 L_z R_y+a^1 L_y S_z+L a^3 R_y
S_z+L_{y,z}\right)}{L}+w_{y,z}=0,
\end{split}
\end{equation}
which is of the required form since we may in this case write the
constant term $ J a^3 R_y S_z/ L$ in the transformed equation in the
form $w \left( J a^3 R_y S_z/ (L w) \right).$ Therefore,
\eqref{eq:eqv-hyperu4} defines the equivalence group $H$ of
\eqref{eq:hyperu}. It differs from the equivalence group $G$ of
\eqref{eq:hyper} only by the additional term $J(y,z)$ in the
transformation of $u,$ which has the effect of adding a sort of
nonhomogeneous term to the transformed equation.\par

   Now that the equivalence group \eqref{eq:eqv-hyperu4} of
\eqref{eq:hyperu} is available, we can clearly demonstrate how
Theorem \ref{th:dumb2} can be used for a much easier determination
of equivalence groups for variants of  \eqref{eq:hyperu}, in which
the arbitrary coefficients $a^i$ have arguments of the form $X^i
\subset \set{t, x, u}.$ Indeed, since  by Theorem \ref{th:dumb2} any
such group  is a subgroup of $H,$ its equivalence transformations
must therefore be sought in the form \eqref{eq:eqv-hyperu4}, which
greatly simplifies calculations. We show this by considering two
examples.\par
To begin with, consider a variant of \eqref{eq:hyperu} of the form
\begin{equation}\label{eq:hyperxp}
u_{t x} + a^1(x) u_t + a^2(t) u_x + a^3(t,x)u=0.
\end{equation}
   Since the coefficient $a^3(t,x)$ of $u$ does not involve $u$
itself, we may assume that $J(y,z)=0$ in \eqref{eq:eqv-hyperu4}, and
we thus look for equivalence transformations of \eqref{eq:hyperxp}
in the form
\begin{equation}\label{eq:eqv-hyperxp1}
t= R(y), \quad x= S(z), \quad \text{ and }  u=  L(y,z) w,
\end{equation}
and under which the coefficients $\theta^y$ of $w_y$ and $\theta^z$
of $w_z$ take the form
$$
\theta^y= a^1 S_z + \frac{L_z}{L}, \quad \text{ and } \quad
\theta^z= a^2 R_y + \frac{L_y}{L}.
$$
Since we  have
$$
0= \frac{\pd \,\theta^y}{\pd y}= \frac{\pd \,\theta^z}{\pd z}, \quad
\text{ and } \quad \frac{\pd \,\theta^y}{\pd y}= \frac{\pd
\,\theta^z}{\pd z}=\frac{L L_{y,z} - L_y L_z}{T^2},
$$
  $L$ must satisfy the condition
  $\pd_y (L_z/L)=0,$ and hence  $L=e^{\, f(y)+g(z) },$ for some
arbitrary functions $f$ and $g.$ This new expression for $L$ reduces
\eqref{eq:eqv-hyperxp1} to
\begin{equation}\label{eq:eqv-hyperxp2}
t= R(y), \quad x= S(z), \quad \text{ and }\quad  u=  e^{ f(y) +
g(z)} w,
\end{equation}
and the latter change of variables transforms \eqref{eq:hyperxp}
into
\begin{equation}\label{eq:hyperxp2}
\begin{split}
& w \left(f_y \left(g_z+a^1 S_z\right)+R_y \left(a^2 g_z+a^3
S_z\right)\right)\\
&+\left(g_z+a^1 S_z\right) w_y+\left(f_y+a^2 R_y\right)
w_z+w_{y,z}=0.
\end{split}
\end{equation}
which is of the prescribed form, and this shows that
\eqref{eq:eqv-hyperxp2} represents the equivalence transformations
of \eqref{eq:hyperxp}.\par

For the second example of determination with a variant of
\eqref{eq:hyperu}, consider the equation
\begin{equation}\label{eq:hypertt}
u_{t x} + a^1(t) u_t + a^2(t) u_x + a^3(t,x)u=0,
\end{equation}
in which the coefficient of $u_t$ and $u_x$ depend on $t$ alone.
Here again, the equivalence group is to be sought in the form
\eqref{eq:eqv-hyperxp1}, and under this change of variables, the
coefficients $\theta^y$  of $w_y$ and $\theta^z$ of $w_z$ take the
form
$$
\theta^y= a^1 S_z + \frac{L_z}{L}, \quad \text{ and } \quad
\theta^z= a^2 R_y + \frac{L_y}{L},
$$
where $a^1= a^1(R(y))$  and $a^2=a^2(R(y)),$ and the conditions
$0=\pd_z \theta^y$ and $0 =\pd_z \theta^z$ lead to the differential
equations
\begin{equation}\label{eq:hypttc1}
\frac{-L_z^2+L L_{zz}}{L^2}+a^1 S_{zz}=0, \quad \text{ and } \quad
\frac{-L_y L_z+L L_{y,z}}{L^2}=0.
\end{equation}
It follows from the arbitrariness of the coefficient $a^1$ in the
first equation of \eqref{eq:hypttc1} that $S= k_1 z+ k_2$ for some
constants $k_1$ and $k_2,$ while
$$-L_y L_z+L L_{y,z}=0.$$
The latter equation has solution $L= g(y) e^{z f(y)},$  where
$g\neq0$ and $f$ are arbitrary functions. A substitution of this
expression for $L$ in the second equation of \eqref{eq:hypttc1}
leads to $ g f_y e^{2 z f}=0,$ and hence $f= k_3$ is a constant
function, so that the corresponding change of variables
\eqref{eq:eqv-hyperxp1} now takes the form
\begin{equation}\label{eq:eqvhyptt22}
t= R(y), \quad x= k_1 z + k_2, \quad \text{ and } \quad  u= g(y)
e^{k_3 z} w.
\end{equation}
The transformation of \eqref{eq:hypertt} under \eqref{eq:eqvhyptt22}
yields the equation
\begin{equation}\label{eq:eqvhypttfn}
\begin{split}
& \left(a^1 k_1+L_z/L\right) w_y+\left(a^2
R_y + L_y/L \right) w_z +w_{y,z}\\
&+ w \left(A k_1 L_y+k_1 L a^3 R_y+a^2 L_z R_y+L_{y,z}\right)/L=0
\end{split}
\end{equation}

which is of the required form, and thus \eqref{eq:eqvhyptt22}
represents the group of equivalence transformations of
\eqref{eq:hypertt}.\par

\subsection{Some applications}

   It is worthwhile making some  comments at this point on some
applications of equivalence transformations with regards to the
integration of differential equations. By essence, equivalence
transformations are a key tool by means of which equations can be
classified and then represented in each equivalence class by a
simpler and more tractable canonical form. If we consider for
instance the variant \eqref{eq:hyperxp}  of the linear hyperbolic
equation if follows from the form of the transformed equation
\eqref{eq:hyperxp2} that if in the corresponding equivalence
transformations \eqref{eq:eqv-hyperxp2} we set
\begin{equation*}
f= k_1 - \int a^2(R) R_y dy, \quad \text{ and }\quad g= k_2 - \int
a^1 (S) S_z dz,
\end{equation*}
for some constants $k_1$ and $k_2,$  and then set $R_y S_z=1,$ then
the transformed equation is reduced to
\begin{equation}\label{eq:hyperxp2red3}
b(y,z) w + w_{y,z}=0,\quad \text{ with }\quad  b(y,z)= a^3- a^1 a^2,
\end{equation}
which gives a much simpler canonical form for all equations of the
form \eqref{eq:hyperxp}. In particular, if in the latter equation we
have
\begin{equation}\label{eq:hyperxpcd1}
    a^3= a^1 a^2,
\end{equation}
then \eqref{eq:hyperxp} is always reducible to the linear wave
equation
\begin{equation}\label{eq:hyperxptrv}
    w_{y,z}=0.
\end{equation}
The complete classification of families of equations can be realized
by means of invariant functions of these equations, which as already
mentioned are defined by the equivalence transformations. For
instance using the functions
$$
H= a^1_t + a^1 a^2 - a^3,  \qquad  K=  a^2_x + a^1 a^2 - a^3
$$
known as Laplace invariants \cite{ibra-lap}, Ovsiannikov obtained
the contact invariants $P= H/K$ and $Q= (\ln |H|)_{t,x}/H$ and used
them to achieve the classification of a subfamily of the linear
hyperbolic equation \eqref{eq:hyperxp} \cite{ovsi-chasyv1, ovsi-bk}.
Similar classifications of differential equations based on invariant
functions have also been carried out by other more recent writers on
the topic \cite{schw-pap, schw-bk, mrz-clhyper, ndogvf}.  Using
Cartan's equivalence method in the form developed by Fels and Olver
\cite{olv-fls2}, Morozov \cite{mrz-clhyper} gave a complete solution
to the equivalence problem for a number of families of partial
differential equations, under the pseudo-group of contact
transformations. \par

Significant simplifications of the equation, which may include a
reduction of its order, are often achieved by the simple vanishing
of the invariants of the equation. For instance, the condition $a^3=
a^1 a^2$ obtained in \eqref{eq:hyperxpcd1} by a direct analysis of
the equivalence transformations of \eqref{eq:hyperxp} and which have
led to the reduction of this equation to the linear wave equation
$w_{y,z}=0,$ actually corresponds exactly to the vanishing of the
invariant $P= H/K$ of this equation. Similar reductions or
derivations of integrability properties  of differential equations
based on the vanishing of invariant functions were carried out by E.
H. Laguerre \cite{lag}, followed by F. Brioschi \cite{brio} for
third and fourth order ODEs, as well as by Liouville \cite{liouv}
and many others.\par

\section{Concluding remarks}

   In this paper, we considered two families ${\rm (A)}$ and ${\rm
(B)}$ of DEs of a similar form depending on arbitrary functions,
where the arguments of each of the arbitrary functions in ${\rm
(A)}$ are a subset of that for the corresponding arbitrary function
in ${\rm (B)}.$ We then showed in Theorem \ref{th:dumb2} that the
equivalence group $G^A$ of $\rm (A)$ must be a subgroup of the
equivalence group $G^B$ of $\rm (B).$ We then showed through some examples of applications how the theorem can be used either for an easier determination of equivalence subgroups
or just equivalence transformations of a complex family of DEs, or
for finding the equivalence group $G^A$ of type ${\rm (A)}$ DEs when
$G^B$ is known.


\end{document}